\documentclass[a4paper,10pt]{amsart}

\setcounter{tocdepth}{1}

 \usepackage{graphicx,color}
\usepackage{geometry}
\geometry{letterpaper}

\usepackage{color, graphicx}
\usepackage[all]{xy}
\usepackage{array}

\usepackage{mathrsfs}
\usepackage{bbold}
\usepackage[T1]{fontenc} 
\usepackage{textcomp}
\usepackage{times}
 \usepackage[scaled=0.92]{helvet} 

\usepackage{amssymb}
\usepackage[active]{srcltx}

\newcommand{\Q }{\mathbf{Q} }

\newcommand{\R }{\mathbf{R} }
\newcommand{\PP }{\mathbf{P} }

\renewcommand{\Re}{\mathsf{Re}}
\swapnumbers
\newtheorem{prop}{Proposition}[section]
\newtheorem{lemma}[prop]{Lemma}

\newtheorem{theorem}[prop]{Theorem}

\theoremstyle{definition}

\newtheorem{note}[prop]{Notation}
\newtheorem{definition}[prop]{Definition}
\newtheorem{rem}[prop]{Remark}
\newtheorem{ex}[prop]{Example}
\newtheorem{app}[prop]{Application}
\newtheorem{se}[prop]{}

\usepackage{amsmath,amssymb,amsfonts,amsthm}


\begin{document}

  \date{\today\ (version 1.0)} 
\title[Families of Dirichlet series]{Convergence of families of Dirichlet series}
\author[G.~Cornelissen]{Gunther Cornelissen}
\address{\normalfont Mathematisch Instituut, Universiteit Utrecht, Postbus 80.010, 3508 TA Utrecht, Nederland}
\email{g.cornelissen@uu.nl}
\author[A.~Kontogeorgis]{Aristides Kontogeorgis}
\address{\normalfont Department of Mathematics,  
National and Kapodistrian University of Athens, 
Panepistimioupolis, GR-157 84, Athens, 
Greece } 
\email{kontogar@math.uoa.gr}

\subjclass[2010]{}
\keywords{\normalfont Dirichlet series, zeta function, Riemannian manifold, spectrum, convergence}

  \maketitle

\begin{abstract}
We give some conditions under which (uniform) convergence of a family of 
Dirichlet series to another Dirichlet series implies the convergence of their individual 
coefficients and/or exponents. We give some applications to some spectral zeta functions that arise in 
Riemannian geometry and physics.
\end{abstract}

\section{Introduction} 
Suppose that we have pointwise convergence of a sequence of general Dirichlet series 
\begin{equation} \label{D} D_n(s)=\sum_{\nu \geq 1} a_{n,\nu} e^{ -s \mu_{n,\nu}} \rightarrow D(s)=\sum_{\nu \geq 1} a_\nu e^{ -s \mu_\nu}, \end{equation}
all of which converge absolutely in a common half plane $\Re(s)>\gamma$, 
with $a_*$ complex coefficients, and $\mu_*$ is a \emph{strictly} increasing sequence of real numbers. In this paper, we study what this implies about ``convergence'' of the sequences $(a_{n,\nu})$ and $(\mu_{n,\nu})$. First, we consider the case of classical Dirichlet series, where $a_*=1$ and $\mu_* = \log \lambda_*$, and in Theorem \ref{main}, we prove that (\ref{D}) is equivalent to the $\ell^1$-convergence of $(\lambda^\gamma_{n,\nu})$ to $(\lambda^\gamma_{\nu})$. 
In Theorem \ref{qwe}, we consider the case where $\mu_{n,\nu}=\mu_\nu$ is independent of $n$; 
then, for every $n$, $(a_{n,\nu})$ converges to $(a_n)$.
The most general case is studied in Section \ref{general} from the point of view of the Perron formula for Dirichlet series (where we give a concrete result under some hypotheses), and form the point of view of the Laplace-Stieltjes transform in Section \ref{L}, where we prove the equivalence to Lipschitz convergence of some step functions. 

One application is to Riemannian geometry. If $\{X_n\}_{n=1}^\infty \cup \{X\}$ is a sequence of connected closed smooth Riemannian manifolds such that $d:=\sup \dim X_n$ is finite, convergence of their zeta functions implies convergence of their spectra. 
The application to Riemannian manifolds seems relevant in the light of proposals in physics to use the Laplace spectrum as parameter space in cosmological averaging problems \cite{Seriu}, and as dynamical variables in classical gravity, inspired by particle models coupled to gravity in noncommutative geometry \cite{LandiRovelli}, \cite{Connes}. We discuss this briefly in Section \ref{phys}, where we show how to use spectral zeta functions to introduce a metric on spaces of Riemannian manifolds up to isospectrality.  By the analogy between manifolds and number fields, we similarly introduce a metric on the space of number fields up to arithmetic equivalence, using  topology deduced from convergence of their Dedekind zeta functions.

\section{Convergence of Dirichlet series} 

\begin{note} Let us once and for all introduce the following convenient notation: if $s \in \mathbf{C}$ and $\Lambda=(\lambda_\nu)_{\nu=1}^\infty$ is a sequence of positive real numbers, we  denote by $\Lambda^s$ the sequence $(\lambda_\nu^s)_{\nu=1}^\infty$. 
\end{note} 

\begin{theorem} \label{main}
Suppose that 
\[
 D_n(s)=\sum_{\nu \geq 1}  \lambda_{n,\nu}^{-s} \mbox{ and } D(s):=\sum_{\nu \geq 1 } \lambda_\nu^{-s}
\]
is a family of (generalized) Dirichlet series for $n=\emptyset, 1,2,\dots$, where, for each $n$, $\Lambda_n:=(\lambda_{n,\nu})_{\nu=1}^\infty$ 
forms a sequence of increasing positive real numbers with finite multiplicities. 
Assume that all series $D_n(s)$ are convergent in a common right half plane $\Re(s)>\gamma>0$. Then the following are equivalent: 
\begin{enumerate}
\item[\textup{(i)}] As $n \rightarrow +\infty$, the functions $D_n(s)$ converge to $D(s)$, pointwise in $s$ with $\Re(s)>\gamma$;
\item[\textup{(ii)}]  For every fixed $\nu$, any bounded subsequence of $\{\lambda_{n,\nu}\}_{n=1}^\infty$ converges to the same element $\lambda \in \Lambda$, and 
\[
 \#\{ (\lambda_{n,\nu})_{n=1}^\infty: \lim_{n \rightarrow \infty}\lambda_{n,\nu}= \lambda \}=
\#\{ \lambda_\nu: \lambda_\nu=\lambda\}.
\]
\item[\textup{(iii)}]  $\Lambda^{-\gamma}_n$ converges to $\Lambda^{-\gamma}$ in $\ell^1$.  
\end{enumerate}
\end{theorem}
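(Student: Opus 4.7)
My approach is to establish the cycle $\text{(iii)} \Rightarrow \text{(i)} \Rightarrow \text{(ii)} \Rightarrow \text{(iii)}$. The first and third implications are analytic estimates, while the middle one is an induction on $\nu$ based on the uniqueness principle for general Dirichlet series.

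For (iii) $\Rightarrow$ (i), I would first note that $\lambda_{n,1}^{-\gamma}\to\lambda_1^{-\gamma}>0$ yields a uniform positive lower bound $\lambda_{n,\nu}\geq\lambda_{n,1}\geq c>0$. For $\Re(s)>\gamma$ and $a,b\geq c$, I would combine $a^{-s}-b^{-s}=s\int_b^a t^{-s-1}\,dt$, the factorisation $t^{-s-1}=t^{-\gamma-1}t^{\gamma-s}$, and the bound $|t^{\gamma-s}|\leq c^{\gamma-\Re(s)}$ to obtain
\[
|a^{-s}-b^{-s}|\leq\frac{|s|}{\gamma}\,c^{\gamma-\Re(s)}\,|a^{-\gamma}-b^{-\gamma}|.
\]
Summing over $\nu$ gives $|D_n(s)-D(s)|\leq(|s|/\gamma)\,c^{\gamma-\Re(s)}\,\|\Lambda_n^{-\gamma}-\Lambda^{-\gamma}\|_{\ell^1}\to 0$, in fact locally uniformly on $\{\Re(s)>\gamma\}$.

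For (i) $\Rightarrow$ (ii), I would argue by induction on $\nu$. In the base case $\lambda_{n,1}\to\lambda_1$, a uniform positive lower bound for $\lambda_{n,1}$ comes from $\lambda_{n,1}^{-\sigma_0}\leq D_n(\sigma_0)\to D(\sigma_0)$, and the elementary estimate $D_n(\sigma)\leq\lambda_{n,1}^{-(\sigma-\sigma_0)}D_n(\sigma_0)$ (valid for $\sigma>\sigma_0>\gamma$) rules out $\lambda_{n,1}\to\infty$ along a subsequence, since that would force $D_{n_k}(\sigma)\to 0$, contradicting $D(\sigma)>0$. For any subsequential limit $\mu^\ast$ of $\lambda_{n,1}$, I would compare these two inequalities against the asymptotic $D(\sigma)\sim m_1\lambda_1^{-\sigma}$ as $\sigma\to\infty$ (with $m_1$ the multiplicity of $\lambda_1$ in $\Lambda$): the lower bound excludes $\mu^\ast<\lambda_1$ (else $(\lambda_1/\mu^\ast)^\sigma$ would be forced bounded) and the upper bound excludes $\mu^\ast>\lambda_1$ by a symmetric argument. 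For the inductive step, I would subtract the already-identified initial terms: $D_n(s)-\sum_{j\leq k}\lambda_{n,j}^{-s}\to D(s)-\sum_{j\leq k}\lambda_j^{-s}$ is again a Dirichlet series of the required type with leading exponent $\lambda_{k+1}$, and the base case applies.

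For (ii) $\Rightarrow$ (iii), I would first argue that (ii) together with the ordered structure of the $\lambda_{n,\nu}$ and the multiplicity clause forces the full pointwise convergence $\lambda_{n,\nu}\to\lambda_\nu$ (a sub-subsequence escaping to infinity would leave too few indices matching any given multiplicity). Hence $\lambda_{n,\nu}^{-\gamma}\to\lambda_\nu^{-\gamma}$, and using $|a-b|=a+b-2\min(a,b)$,
\[
\|\Lambda_n^{-\gamma}-\Lambda^{-\gamma}\|_{\ell^1}=D_n(\gamma)+D(\gamma)-2\sum_\nu\min(\lambda_{n,\nu}^{-\gamma},\lambda_\nu^{-\gamma}),
\]
where the final sum tends to $D(\gamma)$ by dominated convergence (dominator $\lambda_\nu^{-\gamma}\in\ell^1$). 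The problem thus reduces to $D_n(\gamma)\to D(\gamma)$: Fatou furnishes $\liminf D_n(\gamma)\geq D(\gamma)$, and the matching upper bound, extending pointwise convergence on $\Re(s)>\gamma$ down to the abscissa, is the principal remaining technical point; I would handle it using the monotone ordering of $\Lambda_n$ and a uniform tail estimate on $D_n(\sigma)$ as $\sigma\downarrow\gamma$. The main obstacle throughout is the base case of the induction, where the asymptotic $D(\sigma)\sim m_1\lambda_1^{-\sigma}$ must be wielded sharply enough to trap $\mu^\ast$ at exactly $\lambda_1$.
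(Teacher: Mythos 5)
Your cycle $\textup{(iii)}\Rightarrow\textup{(i)}\Rightarrow\textup{(ii)}\Rightarrow\textup{(iii)}$ follows a genuinely different route from the paper in the first two legs, and both of those legs are sound. For $\textup{(iii)}\Rightarrow\textup{(i)}$ your integral mean-value estimate $|a^{-s}-b^{-s}|\le\frac{|s|}{\gamma}c^{\gamma-\Re s}|a^{-\gamma}-b^{-\gamma}|$ is a clean quantitative version of what the paper dismisses with ``it follows easily.'' For $\textup{(i)}\Rightarrow\textup{(ii)}$ the paper normalises $D_n(s)=\lambda_{n,1}^{-s}\sum_\nu(\lambda_{n,1}/\lambda_{n,\nu})^s$, invokes a lemma on uniform-in-$n$ convergence of tails to interchange two limits, and then lets $s\to\infty$ along integers; you instead sandwich $\lambda_{n,1}^{-\sigma}\le D_n(\sigma)\le\lambda_{n,1}^{-(\sigma-\sigma_0)}D_n(\sigma_0)$ against the asymptotic $D(\sigma)\sim m_1\lambda_1^{-\sigma}$. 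The underlying mechanism (the smallest exponent dominates as $\sigma\to\infty$) is the same, but your version avoids the limit-interchange and the uniform-tail lemma (whose proof in the paper is only gestured at) and proves outright that $\lambda_{n,\nu}\to\lambda_\nu$ for every $\nu$, which is the form of (ii) one actually needs.

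The real issue is $\textup{(ii)}\Rightarrow\textup{(iii)}$, and here the step you flag as ``the principal remaining technical point'' is not a technicality you can patch with ``the monotone ordering of $\Lambda_n$ and a uniform tail estimate'': no such uniform tail estimate follows from the hypotheses, and the implication is in fact false at the boundary value $\gamma$. The paper's own proof of this step silently assumes that $\sum_{\nu>N}\lambda_{n,\nu}^{-\gamma}$ can be made small \emph{uniformly in $n$}, which does not follow from a common open half-plane of convergence. Concretely, take $\lambda_\nu=2^\nu$ (so $D(1)=1$), and set $\lambda_{n,\nu}=2^\nu$ for $\nu\le n$ and $\lambda_{n,\nu}=2^{n+1}(\nu-n)^{1+\delta_n}$ for $\nu>n$, with $\delta_n>0$ chosen so that $2^{-(n+1)}\zeta(1+\delta_n)=1+2^{-n}$. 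All $D_n$ converge absolutely for $\Re(s)>1$; for every fixed $\sigma>1$ one has $\sum_{\nu>n}\lambda_{n,\nu}^{-\sigma}=2^{-(n+1)\sigma}\zeta\big((1+\delta_n)\sigma\big)\le 2^{-(n+1)\sigma}\zeta(\sigma)\to 0$, so $D_n\to D$ pointwise and (i) holds; $\lambda_{n,\nu}\to\lambda_\nu$ for every $\nu$, so (ii) holds; yet $D_n(1)\equiv 2$ and
\[
\|\Lambda_n^{-1}-\Lambda^{-1}\|_{\ell^1}\;\ge\;\Big|\textstyle\sum_{\nu>n}\lambda_{n,\nu}^{-1}-\sum_{\nu>n}2^{-\nu}\Big|\;=\;1,
\]
so (iii) fails at $\gamma=1$. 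Thus your reduction to $D_n(\gamma)\to D(\gamma)$ is exactly the right diagnostic, but the limit simply need not hold; Fatou gives only $\liminf D_n(\gamma)\ge D(\gamma)$, and the reverse inequality is what breaks. Your proof therefore cannot be completed as written, and neither can the paper's; the equivalence with (iii) is only salvageable if one either quantifies $\gamma$ existentially (``for \emph{some} $\gamma$ in the interior of the common half-plane,'' which is what the paper's own Proposition on Riemannian manifolds does) or adds a hypothesis of uniform-in-$n$ tail decay at exponent $\gamma$.
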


\begin{rem} The assumption that all series $D_n(s)$ are convergent in a common right half plane $\Re(s)>\gamma$ is a minimal necessary assumption, since if this is not the case, the questions we ask are void.
\end{rem} 

\begin{rem}
The series $D_n$ (when divergent at $s=0$), converges for $\Re(s)>\gamma_n$, where (see Chapter 1, Section 6 of \cite{Hardy}): \begin{equation} \label{hardy1}
 \gamma_n=\limsup_{\nu\rightarrow \infty}\frac{\log \nu }{\log \lambda_{n,\nu}}.
\end{equation}
The hypothesis says that $\gamma:=\sup\gamma_n$ is finite. 
\end{rem}

\begin{rem}
The spectral zeta function $\zeta_X$ of a closed smooth Riemannian manifold converges absolutely for $\Re(s)>d/2$, where $d$ is the dimension of $X$ \cite{Rosenberg}. In the situation of Theorem \ref{Riem}, the assumption of a common half-plane of convergence hence follows from the fact that we assume that $\sup \dim(X_n)$ is finite. Hence Theorem \ref{Riem} follows from Theorem \ref{main}. 
\end{rem}

\begin{proof}[Proof of Theorem \ref{main}]
Since we later want to interchange some limits, we will first prove: 

\begin{lemma}\label{U} The sequences 
\[
 S_{n,N}(s):=  \sum_{\nu=1}^N \frac{1}{\lambda_{n,\nu}^s}
\]
converge to $D_n(s)$ for $N \rightarrow + \infty$ \emph{uniformly in $n$}.
\end{lemma}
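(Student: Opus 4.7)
The plan is to extract the tail bound $\sum_{\nu>N}\lambda_{n,\nu}^{-s}\to 0$ as $N\to\infty$ uniformly in $n$. For this we only need a lower bound on $\lambda_{n,\nu}$ of the form $\lambda_{n,\nu}\ge c\,\nu^{1/s_{0}}$, valid uniformly in $n$, for some real $s_{0}$ with $\gamma<s_{0}<\Re(s)$.

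First I would fix $s$ with $\Re(s)=\sigma>\gamma$ and pick a real intermediate exponent $s_{0}$ with $\gamma<s_{0}<\sigma$. Since $D_{n}(s_{0})\to D(s_{0})$ pointwise by hypothesis (i), the numerical sequence $\{D_{n}(s_{0})\}_{n\geq 1}$ is bounded; set
\[
M:=\sup_{n}\,\sum_{\nu\ge 1}\lambda_{n,\nu}^{-s_{0}}<\infty.
\]
Because $(\lambda_{n,\nu})_{\nu}$ is increasing, for every fixed $\nu$ we have
\[
\nu\cdot \lambda_{n,\nu}^{-s_{0}}\le \sum_{\mu=1}^{\nu}\lambda_{n,\mu}^{-s_{0}}\le M,
\]
which yields the key uniform lower bound
\[
\lambda_{n,\nu}\ge (\nu/M)^{1/s_{0}}\qquad \text{for all }n,\nu.
\]

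With this in hand, the tail estimate is immediate: for $\Re(s)=\sigma$,
\[
\bigl|D_{n}(s)-S_{n,N}(s)\bigr|\le \sum_{\nu>N}\lambda_{n,\nu}^{-\sigma}\le M^{\sigma/s_{0}}\sum_{\nu>N}\nu^{-\sigma/s_{0}}.
\]
Since $\sigma/s_{0}>1$, the right-hand side tends to $0$ as $N\to\infty$ independently of $n$, proving the claimed uniform convergence.

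The only subtle point, and the one I expect to be the main obstacle, is the passage from pointwise convergence of $D_{n}$ to the boundedness constant $M$ which is uniform in $n$; everything else is a comparison of the Dirichlet series with the canonical majorant $\sum \nu^{-\sigma/s_{0}}$. One could phrase the argument more locally (uniformly on compacta of the half-plane $\Re(s)>\gamma$) by choosing $s_{0}$ depending on such a compact set, but for the statement of Lemma \ref{U} pointwise uniformity in $n$ is what is required.
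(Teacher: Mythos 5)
Your proof is correct, but it takes a genuinely different (and in one respect more careful) route than the paper. The paper appeals to the bound $\gamma_n\leq\gamma$ and then cites Hardy's abscissa-of-convergence estimate, saying one may use the common $\gamma$ in place of each $\gamma_n$. That argument as stated is terse: knowing only that each $\gamma_n\leq\gamma$ does \emph{not} by itself yield a tail bound uniform in $n$ (one can cook up families with all $\gamma_n=\gamma$ for which $D_n(\sigma)\to\infty$ and the tails blow up with $n$), so some additional uniformity input across the family is needed. You supply exactly that input by invoking the pointwise convergence of $D_n(s_0)$ at a single intermediate real point $s_0\in(\gamma,\Re s)$: a convergent sequence is bounded, giving $M:=\sup_n D_n(s_0)<\infty$, and then the monotonicity of $(\lambda_{n,\nu})_\nu$ together with positivity of the terms gives the clean Chebyshev-type bound $\nu\lambda_{n,\nu}^{-s_0}\leq D_n(s_0)\leq M$, hence $\lambda_{n,\nu}\geq(\nu/M)^{1/s_0}$ uniformly in $n$. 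Comparing the tail with $M^{\sigma/s_0}\sum_{\nu>N}\nu^{-\sigma/s_0}$ (convergent since $\sigma/s_0>1$) then gives an $N_0$ independent of $n$. This is self-contained, uses the standing hypothesis (i) which is indeed available since the lemma is deployed only in the proof of (i) $\Rightarrow$ (ii), and makes explicit where the uniformity in $n$ actually comes from, which the paper's reference to Hardy leaves implicit.
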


\begin{proof}[Proof of Lemma \ref{U}] The uniform convergence means that  
\[
 \forall \epsilon>0, \exists N_0 \geq 1, \forall N>N_0, \;\;\; 
 \left|
  \sum_{\nu=1}^N \frac{1}{\lambda_{n,\nu}^s}-D_n(s)
 \right|\leq \epsilon
\]
where $N_0$ \emph{does not depend on $n$}.
Now notice that by Equation  (\ref{hardy1}) we have 
\[
 \gamma_n= \lim \sup_{\nu \rightarrow \infty }\frac{ \log \nu}{\log \lambda_{n,\nu}}
\]
and the series $D_n(s)$ converge for $\mathrm{Im}(s) \geq \gamma_n$. 
The assumption that there is a common half plane of convergence for all $D_n$ means that the sequence $\gamma_n$ is bounded
by $\gamma=\sup \gamma_n$. 
The uniform convergence of $S_{n,N}(s)$ follows by repeating the argument found in
the proof on page 7 of \cite{Hardy}. The proof there uses $\gamma_n$ for each $n$, but one may as well use (the same) $\gamma$ for all $n$. 
\end{proof} 

Next, we will show that unbounded subsequences do not contribute to the limit. For this, suppose that, for some fixed $\kappa$, 
$(\lambda_{n_k,\kappa})_{k\in \mathbf{N}}$ is an unbouded subsequence, with $$\lim_{k\rightarrow \infty} \lambda_{n_k,\kappa}=\infty$$ and, by enlarging the subsequence if necessary,  such that the sequence
\[
 (\lambda_{n,\kappa})_{n\in (\mathbf{N}- \{n_k: k \in \mathbf{N} \}) }  
\]
is either bounded or the empty set. Now since $\lambda_{n_k,\kappa} \leq \lambda_{n_k,\mu}$ for $\mu \geq \kappa$, 
all sequences $(\lambda_{n_k,\mu})_{k\in \mathbf{N}}$  for $\mu \geq \kappa$ tend to infinity as 
well. Observe now that the series 
\[
 D^{\geq \kappa}_{n_k}(s):=\sum_{\nu=\kappa}^\infty \frac{1}{\lambda_{n_k,\nu}^s} 
\]
tends to the zero function as $n_k$ tends to infinity: 
\[
 \lim_{n_k\rightarrow \infty} D^{\geq \kappa}_{n_k}(s)= \sum_{\nu=\kappa}^\infty 
 \lim_{n_k\rightarrow \infty} \frac{1}{\lambda_{n_k,\nu}^s}=0,
\]
for $s$ real positive (hence for all $s$ by analytic continuation). 
In the above equation we were allowed to interchange the order
of the limits (in $n_k$ and the summation variable of $D_{n_k}$) since the series converge uniformly in $n_k$. 
Since we assume that $D_n$ is a convergent sequence of functions, it has the same limit as its the subsequence $D_{n_k}$. 

Since we have now proven that unbounded subsequences do not contribute to the limit, we can assume that $\lambda_{n,\nu}$ is  bounded in $n$, for all $\nu$, i.e., 
\[
 \forall n\in \mathbf{N}\;\; \lambda_{n,\nu} \leq c_\nu.
\]
Then we can select a subsequence so that for all $\nu$, the limit
$$\lim\limits_{k\rightarrow \infty} \lambda_{n_k,\nu}=\ell_\nu$$ exists. 
Not to overload notation, we will momentarily relabel the convergent subsequence $\lambda_{n_k,\nu}$ as $\lambda_{n,\nu}$. 
In particular,  $\lambda_{n,1}$ converges to $\ell_1$.
We will prove that $\lambda_{n,1}$ converges to $\lambda_1$. 
Let us rewrite 
\[
 D_n(s)=\frac{1}{\lambda_{n,1}^s}\left(\sum_{\nu=1}^\infty  
\left(\frac{\lambda_{n,1}}{\lambda_{n,\nu}}\right)^s \right)
\]
and 
\[
 D(s)=\frac{1}{\lambda_{1}^s}\left(\sum_{\nu=1}^\infty  
\left(\frac{\lambda_{1}}{\lambda_{\nu}}\right)^s \right).
\]
We now assume that \emph{$s$ is an integer $s>\gamma$}. 
Since $D_n(s)\rightarrow D(s)$ we have that
\begin{equation} \label{11}
 \lim_{n\rightarrow \infty}  
\left(
\frac{\lambda_1}{\lambda_{n,1}}
\right)^s=
\frac{
 \sum\limits_{\nu=1}^\infty  
\left(\frac{\lambda_{1}}{\lambda_{\nu}}\right)^s
}
{
   \lim_{n\rightarrow \infty} \sum\limits_{\nu=1}^\infty 
\left(\frac{\lambda_{n,1}}{\lambda_{n,\nu}}\right)^s
}
\leq \sum_{\nu=1}^\infty  
\left(\frac{\lambda_{1}}{\lambda_{\nu}}\right)^s
\end{equation}
For the last inequality, we have used the fact that $\lambda_{n,\nu}>0$ and 
that the denominator is $\geq 1$.

Set 
$$\ell:=\lim_{n\rightarrow \infty} \frac{\lambda_1}{\lambda_{n,1}}.$$
We now consider the limit as $s \rightarrow \infty$ (along the integers) in Equation (\ref{11}), to find
\[
  \lim_{s\rightarrow \infty} \ell^s \leq 
\#\{\lambda_i =\lambda_1 \}.
\]

Now
\[
 \lim_{s \rightarrow \infty} \ell^s=
\left\{
\begin{array}{ll}
 1 & \mbox{ if } \ell=1 \\
 0 & \mbox{ if } \ell<1 \\
 \infty & \mbox{ if } \ell >1
\end{array}
  \right..
\]
Hence we find $\ell \leq 1$. 

We also have the inequality 
\begin{equation} \label{12} 
\ell^s= \lim_{n\rightarrow \infty}  
\left(
\frac{\lambda_1}{\lambda_{n,1}}
\right)^s=
\frac{
 \sum\limits_{\nu=1}^\infty 
\left(\frac{\lambda_{1}}{\lambda_{\nu}}\right)^s
}
{
   \lim_{n\rightarrow \infty} \sum\limits_{\nu=1}^\infty 
\left(\frac{\lambda_{n,1}}{\lambda_{n,\nu}}\right)^s
}
\geq 
\frac{1}
{
 \sum\limits_{\nu=1}^\infty 
 \left(\frac{\ell_1}{\ell_\nu }\right)^s
}.
\end{equation}
In the inequality, we have used that we can interchange limit and summation in the denominator, by uniform convergence. 

By taking the limit $s\rightarrow \infty$ (along the integers) we arrive at
\[
 \lim_{s\rightarrow \infty} \ell^s \geq \frac{1}{\#\{ \ell_n=\ell_1 \}} >0.
\]
We conclude from all the above that $\ell=1$, and hence that 
\[
 1= \frac{\#\{\lambda_i =\lambda_1 \}}{\#\{ \ell_n=\ell_1 \}}.
\]
Now recall that we have relabelled before, so that we have actually shown that every convergent  subsequence $(\lambda_{n_k,1})_{k\in \mathbf{N}}$ of $(\lambda_{n,1})_{n\in \mathbf{N}}$  tend to some limit, and since $\ell=1$ all these subsequences converge to the \emph{same} limit 
 $\lambda_{1}$. Therefore $(\lambda_n)_{n\in \mathbf{N}}$ itself is convergent to $\lambda_1$. We conclude that in general (viz., before erasing all unbounded subsequences), that every bounded subsequence of $(\lambda_n)_{n\in \mathbf{N}}$ converges to $\lambda_1$. 

We now use an inductive argument to treat the general term. Namely, consider the Dirichlet  series
$$D^{\geq 2}_n(s):=D_n(s) -\lambda_{n,1}^{-s}$$ which (by what we have proven) converges to $$D^{\geq 2}(s):=D(s)-\lambda_1^{-s}.$$
These are still sequences of Dirichlet series of the same form, but with first eigenvalues $\lambda_{n,2}$ and $\lambda_2$. We can repeat the argument with this series, to conclude $\lambda_{n,2} \rightarrow \lambda_2$, etc. 

This finishes the proof that (i) implies (ii). 

Since we assume that $\Re(s)>\gamma$ is a common half plane of convergence of all series $D_n$ ($n=\emptyset,1,2,\dots$), the sums 
$\sum_{\nu=1}^\infty {\lambda_{n,\nu}^{-\gamma}}$ converge, and hence the sequences $\Lambda_n^{-\gamma}$ ($n=\emptyset,1,2,\dots$) belong to the Banach space $\ell^1$.
We will now prove that  $\Lambda^{-\gamma}_n\rightarrow \Lambda^{-\gamma}$ as elements of $\ell^1$.

In order to do so we have to prove that for every $\epsilon>0$ there is an $n_0 \in \mathbf{N}$
such that $n>n_0$ implies 
\[
  \sum_{\nu=1}^\infty \left|
 \frac{1}{\lambda_{n,\nu}^{\gamma}} - \frac{1}{\lambda_\nu^{\gamma}}
 \right| \leq \epsilon
\]
It is known that if $(a_\nu)$ is a sequence of positive real numbers so that $\sum_{\nu=1}^\infty a_\nu$ converges,
then all of its ``tails'' tend to zero: $$\lim_{N\rightarrow \infty} \sum_{\nu=N}^\infty a_\nu=0.$$  
So given an $\epsilon> 0$ there is an $n_0$, 
which does not depend on $n$ (using the same $\gamma$ for all $n$),
such that for $N\geq n_0$
\[
 \sum_{\nu=N}^\infty \left|
 \frac{1}{\lambda_{n,\nu}^{\gamma}}\right| + \sum_{\nu=N}^\infty
\left|\frac{1}{\lambda_\nu^{\gamma}}
 \right|  <\epsilon/2.
\]
Therefore, 
\begin{eqnarray}
  \sum_{\nu=1}^\infty \left|
 \frac{1}{\lambda_{n,\nu}^{\gamma}} - \frac{1}{\lambda_\nu^{\gamma}}
 \right| &\leq&  
 \sum_{\nu=1}^N \left|
 \frac{1}{\lambda_{n,\nu}^{\gamma}} - \frac{1}{\lambda_\nu^{\gamma}}
 \right| +
 \sum_{\nu=N}^\infty \left|
 \frac{1}{\lambda_{n,\nu}^{\gamma}}\right| + \sum_{\nu=N}^\infty
\left|\frac{1}{\lambda_\nu^{\gamma}}
 \right|   \nonumber \\
 &\leq &
 \sum_{\nu=1}^N 
 \frac{|\lambda_\nu^{\gamma} -\lambda_{n,\nu}^{\gamma}|}
 {\lambda_{n,\nu}^{\gamma}\lambda_\nu^{\gamma}} + \frac{\epsilon}{2} \nonumber \\
 &\leq& 
  \sum_{\nu=1}^N 
 \frac{|\lambda_\nu^{\gamma} -\lambda_{n,\nu}^{\gamma}|}
 {C} + \frac{\epsilon}{2}, \label{con-bound}
\end{eqnarray}
where 
\[
0\neq C=\inf_{1\leq \nu \leq N} 
(\lambda_{1,\nu}^{\gamma}\lambda_\nu^{\gamma}) \leq 
(\lambda_{n,\nu}^{\gamma}\lambda_\nu^{\gamma}).
\]
Now the finite number ($\nu=1,\ldots,N$) of sequences 
$(\lambda_{n,\nu}^{\gamma})_{n\in \mathbf{N}}$
can be made to uniformly converge to $\lambda_\nu^{\gamma}$, 
that is, for every $\epsilon>0$ there is an $n_1$ such that 
$n>n_1$ implies 

$$|\lambda_\nu^{\gamma} -\lambda_{n,\nu}^{\gamma}| \leq \frac{\epsilon C}{2N}$$
and inequality  (\ref{con-bound}) gives us the desired result
for all $n\geq \max\{n_0,n_1\}$.

This proves that (ii) implies (iii). Finally, if $\Lambda_n^{-\gamma}$ converges to $\Lambda^{-\gamma}$, we have for every $s \in \mathbf{C}$ with $\Re(s)>\gamma$ that $\Lambda_n^{-s}$ converges to $\Lambda^{-s}$, and it follows easily that $D_n(s)$ converges to $D(s)$, pointwise in $s$. This proves that (iii) implies (i) and finishes the proof of the theorem. 
\end{proof}

\begin{app} Suppose $X=(X,g_X)$ and $Y=(Y,g_Y)$ are two isospectral connected smooth closed Riemannian manifolds, i.e., suppose their Laplace-Beltrami operators $\Delta_X$ and $\Delta_Y$ have the same spectrum with multiplicities $\Lambda_X=\Lambda_Y$ \cite{Rosenberg}. The spectrum $\Lambda_X$ is considered as a sequence $(\lambda_\nu)_{\nu=1}^\infty$ with $0 \leq \lambda_1 \leq \lambda_2 \leq \dots$, with finite repetitions. 

The identity theorem for Dirichlet series \cite{Hardy} shows that such isospectrality can also be described as the manifolds having the same \emph{zeta function} $\zeta_X = \zeta_Y$, where
$$ \zeta_{X}(s) := \mathrm{tr}(\Delta_X^{-s}) = \sum_{0 \neq \lambda \in \Lambda_X} \frac{1}{\lambda^s}, $$
since connectedness implies that the zero eigenvalue has multiplicity one. 
In this context, Theorem \ref{main} says the following, which is a ``convergent'' version of the identity theorem: 
\begin{prop} \label{Riem} Suppose $\{X_n\}_{n=1}^\infty$ is a sequence of connected closed smooth Riemannian manifolds such that $d:=\sup \dim X_n$ is finite, and suppose that $X$ is another closed smooth Riemannian manifold. Then the following statements are equivalent
\begin{enumerate}
\item[\textup{(i)}] For $\Re(s)>d/2$, the functions $\zeta_{X_n}(s)$ converge pointwise to $\zeta_X$;
\item[\textup{(ii)}] For some $\gamma \in \mathbf{C}$ with $Re(\gamma)>d/2$, the sequence of eigenvalues $\Lambda^{-\gamma}_{X_n}$ converges to $\Lambda_X^{-\gamma}$ in $\ell^1$.  
\end{enumerate}
\end{prop}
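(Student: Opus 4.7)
The plan is to derive Proposition \ref{Riem} as a direct corollary of Theorem \ref{main}. Each spectral zeta function $\zeta_{X_n}$ is a generalized Dirichlet series $\sum \lambda^{-s}$ over the strictly positive Laplace eigenvalues of $X_n$ (the zero eigenvalue has multiplicity one by connectedness and is omitted from the definition), and these eigenvalues form an increasing sequence tending to infinity with finite multiplicities. By the remark preceding the statement, $\zeta_{X_n}$ converges absolutely on $\Re(s) > \dim(X_n)/2$; since $\dim X_n \leq d$ for all $n$, the family shares a common right half-plane of convergence $\Re(s) > d/2$. Hence Theorem \ref{main} applies with any choice of real $\gamma_0$ satisfying $\gamma_0 > d/2$.

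For (i) $\Rightarrow$ (ii), I fix any real $\gamma_0 > d/2$; pointwise convergence of $\zeta_{X_n}$ to $\zeta_X$ on $\Re(s) > d/2$ restricts to pointwise convergence on the smaller half-plane $\Re(s) > \gamma_0$, so the implication (i) $\Rightarrow$ (iii) of Theorem \ref{main} yields $\Lambda_{X_n}^{-\gamma_0} \to \Lambda_X^{-\gamma_0}$ in $\ell^1$. Taking $\gamma := \gamma_0 \in \mathbf{R} \subset \mathbf{C}$ gives (ii).

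For (ii) $\Rightarrow$ (i), suppose $\Lambda_{X_n}^{-\gamma} \to \Lambda_X^{-\gamma}$ in $\ell^1$ for some complex $\gamma$ with $\Re(\gamma) > d/2$. Componentwise convergence in $\ell^1$ yields $\lambda_{n,\nu}^{-\gamma} \to \lambda_\nu^{-\gamma}$ for each fixed $\nu$; passing to moduli gives $\lambda_{n,\nu}^{-\Re(\gamma)} \to \lambda_\nu^{-\Re(\gamma)}$, and applying the continuous map $x \mapsto x^{-1/\Re(\gamma)}$ on $\mathbf{R}_{>0}$ (well-defined since $\Re(\gamma) > 0$) recovers $\lambda_{n,\nu} \to \lambda_\nu$ for each $\nu$. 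This pointwise convergence of eigenvalues is stronger than condition (ii) of Theorem \ref{main}: every bounded subsequence inherits the limit $\lambda_\nu$, and the multiplicity equality is automatic since $\{\nu : \lim_n \lambda_{n,\nu} = \lambda\} = \{\nu : \lambda_\nu = \lambda\}$ for every $\lambda$. Invoking (ii) $\Rightarrow$ (i) of Theorem \ref{main} with any real $\gamma_0 > d/2$ then gives pointwise convergence of $\zeta_{X_n}$ to $\zeta_X$ on $\Re(s) > \gamma_0$, and letting $\gamma_0 \downarrow d/2$ extends this to all of $\Re(s) > d/2$.

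The only genuine subtlety is the mismatch between the complex $\gamma$ allowed in statement (ii) of the proposition and the real $\gamma$ employed in Theorem \ref{main}; the plan above bridges the two by extracting the intrinsically-real content (namely componentwise convergence of eigenvalues) from the $\ell^1$ hypothesis. Everything else is a direct translation of Theorem \ref{main} to the geometric setting.
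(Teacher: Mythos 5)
Your proof is correct and follows the same route as the paper, which simply observes that $\zeta_{X_n}$ converges absolutely on $\Re(s)>\dim(X_n)/2\geq d/2$ by \cite{Rosenberg}, giving a common half-plane so that Theorem \ref{main} applies. You additionally spell out a detail the paper leaves implicit: the proposition allows a complex $\gamma$, and you bridge this to the real $\gamma$ of Theorem \ref{main} by extracting componentwise eigenvalue convergence $\lambda_{n,\nu}\to\lambda_\nu$ from the $\ell^1$ hypothesis (passing through moduli and the continuous map $x\mapsto x^{-1/\Re(\gamma)}$), which lands you in condition (ii) of Theorem \ref{main} independently of $\gamma$, and then lets $\gamma_0\downarrow d/2$ to cover the full half-plane.
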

\begin{rem} If the manifolds are closed and smooth and \emph{of odd dimension}, but possibly disconnected, the equality $\zeta_X = \zeta_Y$ implies that also the multiplicity of the zero eigenvalue is equal for $X$ and $Y$, namely, it is minus the value at $0$ of the analytic continuation of $\zeta_X$ (\cite{Rosenberg}, 5.2). 
\end{rem}
\end{app} 

\begin{ex} The circle of radius $r$ has $\Delta=-r^2 \partial^2_\theta$ (with $\theta\in [0,2\pi)$ the angle coordinate), spectrum $\lambda_{r,\nu} = r^{-2} \lceil \nu/2 \rceil^2$ and zeta function $\zeta_r(s)=r^{2s} \zeta(2s)$, where $\zeta$ is the Riemann zeta funtion. For varying $r \rightarrow r_0$, the convergence in the theorem happens for $\gamma>1/2$.  
\end{ex}

\begin{rem}
Already in the case of families of Riemannian manifolds, it can happen that $(\lambda_{n,\kappa})$ has unbounded subsequences for some fixed $\kappa$; for example, a family of circles whose radius tends to zero. However, for fixed $\kappa$, we have bounds on the eigenvalues of the form (\cite{BBG})
\[
 C_1 \sqrt[d]{\kappa^2}  \leq  \lambda_{n,\kappa} \leq 
\frac{C_2}{\mathrm{vol}(X_n)} \sqrt[d]{\kappa^2},
\]
where the constants $C_i$ depend on the dimension $d$, the diameter $D$, and a lower bound $R$ on the Ricci curvature of the manifolds under consideration. This implies that (at least if we fix the data $d,D$ and $R$, so we are in the Gromov precompact moduli space \cite{Gromov}) in unbounded subsequences, the volume should shrink to zero. 
\end{rem}

\section{Applications: metric theories derived from Dirichlet series}  \label{phys} 

\subsection*{Distances in cosmology} In connection with the averaging problem in cosmology and the question of topology change under evolution of the universe, Seriu \cite{Seriu} proposes to use eigenvalues of the Laplace-Beltrami operator on spatial sections of a cosmological model to construct a metric on the space of such Riemannian manifolds up to some notion of ``large scale isospectrality''. More precisely, he raises two objections against the use of the plain difference of spectra as a measure: large energy contributions (corresponding to small scale geometry) should carry a lower weight, and the dominant weight should be put on the small spectrum (corresponding to large scale geometry); therefore, he introduces a cut-off $N$ and only compares the first $N$ eigenvalues. Secondly, the eigenvalue difference is not a dimensionless quantity, and because of this, he suggests comparing quotients of spectra. However, as $N \rightarrow +\infty$, his distance diverges.

From the above theorem, it also appears natural not to use a cut-off function, but rather use a distance \emph{between the zeta functions} (which, like partition functions, give more weight to low energy in their region of convergence), considered as complex functions; here, one may use classical notions of distance between complex functions \cite{Conway} used in the study of limits of holomorphic or meromorphic functions.  Also, the \emph{quotient} of two zeta functions is a dimensionless function. Actually, a distance between Riemannian manifolds \emph{up to isometry} was constructed by the first author and de Jong, who have furthermore given a spectral characterization of when a diffeomorphism of closed smooth Riemannian manifolds is an isometry, in terms of equality of more general zeta functions under pullback by the map \cite{CdJ}. Also this distance is based on the dimensionless object of quotients of zeta functions. 

In conclusion, we propose the following function as a distance on suitable spaces of Riemannian geometries up to isospectrality: 

\begin{prop} Let $\mathcal{M}$ denote a space of Riemannian manifolds up to isospectrality,  with $$\sup \{ \dim(X) \colon X \in \mathcal{M} \} <2 \gamma$$ finite. Then for any $X_1, X_2 \in \mathcal{M}$, the function
$$ d(X_1,X_2):= \sup_{\gamma < s < \gamma+1} \left| \log \left| \frac{\zeta_{X_1}(s)}{\zeta_{X_2}(s)} \right| \right| $$
where $\Re(s)>\gamma$ is common plane of convergence for the spectral zeta functions of $X_1$ and $X_2$, defines a metric on $\mathcal{M}$.
\end{prop}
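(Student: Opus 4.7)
The plan is to verify the three (in fact four, including finiteness) axioms of a metric directly from the structure of Dirichlet series. The candidate $d$ is built as a supremum of absolute values of logarithms of ratios, so two of the axioms are essentially formal; the content sits in the identity axiom, where I would reduce $d(X_1,X_2)=0$ to isospectrality via positivity of $\zeta_X$ on the real axis together with the identity theorem for Dirichlet series.

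First I would check finiteness. By assumption $\gamma>\dim(X_i)/2$ for $i=1,2$, so each $\zeta_{X_i}$ converges absolutely on a neighbourhood of the closed interval $[\gamma,\gamma+1]\subset\R$. Being a convergent series of strictly positive terms $\lambda^{-s}$, $\zeta_{X_i}(s)$ is continuous and strictly positive on $[\gamma,\gamma+1]$, so the ratio $\zeta_{X_1}(s)/\zeta_{X_2}(s)$ is a continuous strictly positive function there; hence $\bigl|\log|\zeta_{X_1}(s)/\zeta_{X_2}(s)|\bigr|$ is bounded on $(\gamma,\gamma+1)$ and $d(X_1,X_2)<\infty$. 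Non-negativity is immediate from the outer absolute value, and symmetry follows from $\log|a/b|=-\log|b/a|$ together with the outer $|\cdot|$.

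Next, the triangle inequality. For any third $X_3\in\mathcal{M}$ and any real $s\in(\gamma,\gamma+1)$ in a common plane of convergence of the three zeta functions, we may write
\[
\log\left|\frac{\zeta_{X_1}(s)}{\zeta_{X_3}(s)}\right|
=\log\left|\frac{\zeta_{X_1}(s)}{\zeta_{X_2}(s)}\right|
+\log\left|\frac{\zeta_{X_2}(s)}{\zeta_{X_3}(s)}\right|.
\]
Applying the triangle inequality for $|\cdot|$ pointwise and then taking the supremum over $s$ gives $d(X_1,X_3)\le d(X_1,X_2)+d(X_2,X_3)$, with the caveat that one should take $\gamma$ large enough to serve as a common abscissa for all three manifolds (which is possible since $\sup_{X\in\mathcal{M}}\dim(X)<2\gamma$).

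The step requiring a genuine argument is the identity axiom. If $X_1$ and $X_2$ are isospectral, then $\zeta_{X_1}\equiv\zeta_{X_2}$ and clearly $d(X_1,X_2)=0$. Conversely, suppose $d(X_1,X_2)=0$. Then $|\zeta_{X_1}(s)|=|\zeta_{X_2}(s)|$ for every real $s\in(\gamma,\gamma+1)$; but each $\zeta_{X_i}(s)$ is a sum of positive real terms when $s$ is real, hence strictly positive, so in fact $\zeta_{X_1}(s)=\zeta_{X_2}(s)$ for all $s\in(\gamma,\gamma+1)$. Two Dirichlet series that agree on a set with an accumulation point in the interior of their common half-plane of convergence must coincide as holomorphic functions there, and by the identity theorem for Dirichlet series (as cited from \cite{Hardy} in the discussion before Proposition \ref{Riem}) their coefficient/exponent data coincide. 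Hence $\Lambda_{X_1}=\Lambda_{X_2}$, i.e., $X_1$ and $X_2$ represent the same class in $\mathcal{M}$.

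The only delicate point I foresee is bookkeeping around $\gamma$: the parameter $\gamma$ depends on the pair (or triple) of manifolds under consideration, so for the triangle inequality one has to first pass to a common $\gamma$ valid for all three arguments before comparing the three supremums; this is legitimate because enlarging $\gamma$ only shrinks the interval $(\gamma,\gamma+1)$ and hence can only decrease the supremum, but one should at least observe that the definition of $d$ is independent of the particular admissible $\gamma$ chosen, which follows from analytic continuation applied to the ratio $\zeta_{X_1}/\zeta_{X_2}$.
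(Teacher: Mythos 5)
Your proof follows the same route as the paper's: positivity and symmetry from the properties of $|\log(\cdot)|$, the triangle inequality from the multiplicative decomposition $\zeta_{X_1}/\zeta_{X_3}=(\zeta_{X_1}/\zeta_{X_2})\cdot(\zeta_{X_2}/\zeta_{X_3})$, and the identity axiom from positivity of $\zeta_X$ on the real axis plus the identity theorem for Dirichlet series; you usefully add an explicit finiteness check that the paper omits. One small correction to your closing caveat: the assertion that $d$ is independent of the admissible $\gamma$ ``by analytic continuation'' is not true --- for circles of radii $r_1,r_2$ the ratio is $(r_1/r_2)^{2s}$ and the supremum over $(\gamma,\gamma+1)$ equals $2(\gamma+1)\,|\log(r_1/r_2)|$, which visibly depends on $\gamma$ --- but this is harmless, because the hypothesis $\sup\{\dim X: X\in\mathcal{M}\}<2\gamma$ fixes a single $\gamma$ valid for every manifold in $\mathcal{M}$, so no such independence is needed for the triangle inequality or anything else.
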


\begin{proof} The function $d$ is positive, and if $d(X_1,X_2)=0$, then $|\zeta_{X_1}(s)|=|\zeta_{X_2}(s)|$ for all $s$ in the interval $]\gamma,\gamma+1[$. Since this set has accumulation points, and since the zeta function is positive real for such values of $s$, we find that $\zeta_{X_1} = \zeta_{X_2}$ as complex functions. Hence the main theorem (or the identity theorem for Dirichlet series) implies that $X_1$ and $X_2$ are isospectral. The function $d$ is symmetric, since $\left| \log(x^{-1}) \right| = \left| \log(x) \right|$. Finally, the triangle inequality follows from 
$$ \frac{\zeta_{X_1}(s)}{\zeta_{X_3}(s)} = \frac{\zeta_{X_1}(s)}{\zeta_{X_2}(s)} \cdot \frac{\zeta_{X_2}(s)}{\zeta_{X_3}(s)} $$ 
and the usual properties of the absolute value. 
\end{proof}

This is a distance that weighs correctly the energy contributions, but does not depend on a cut-off, nor diverges if a cut-off tends to infinity. Also, convergence in the topology defined by this distance can be easily understood from Theorem \ref{main}. 

\begin{rem}
Taking the supremum over $\gamma < s < \gamma+1$ is quite random, any set with an accumulation point and avoiding the poles of the zeta functions will do. Also, the distance $d$ can be replaced by $d/(1+d)$ to have it take values in the unit interval.  The exact numerical values of the metric are not so relevant, but rather, their interrelation and the topology and uniformity that they induce. 
\end{rem}

\begin{ex} If $S_r$ denotes a circle of radius $r$, then 
$$ d(S_{r_1}, S_{r_2}) = 4 \left| \log(r_1/r_2) \right|. $$ This example shows that the distance can be non-differentiable in the parameter space of a family. 
\end{ex}


%
%
%
\begin{ex}
Let us compute the spectral distance between a sphere $S$ and a real projective space $\R\PP^2$ with the same volume $4 \pi$. The zeta functions are 
$$ \zeta_{S}= \sum_{\nu=1}^\infty \frac{2 \nu +1}{\nu^s (\nu+1)^s} \mbox{ and }   \zeta_{\R\PP^2}= \sum_{\nu=1}^\infty \frac{4 \nu +1}{\nu^s (2 \nu+1)^s}. 
$$
A numerical experiment suggests that the maximum in the distance formula is attained at $s=2$, and there we get
$$ d(S,\R\PP^2) = \log (4-\pi^2/3) \approx 0.342. $$
\end{ex}

\subsection*{Eigenvalues as dynamical variables} Gravity coupled to matter can be given a spectral description using the framework of noncommutative geometry \cite{Connes}. Even by ignoring the matter part, one arives at an interesting description of classical gravity (general relativity) in terms of spectral data. These spectra form a diffeomorphism invariant set of coordinates on the space of manifolds, up to isospectrality. Diffeomorphism invariant coordinates are an important prerequisite for certain programmes to quantize gravity. In this way, spectra were used as dynamical variables for classical gravity by Landi and Rovelli \cite{LandiRovelli}. Our theorem shows that convergence in these spacetime variables is the same as convergence of classical Dirichlet series in complex analysis.

\subsection*{A distance on Number Fields}

Consider the Dedekind-zeta function  for a number field $K$ as $\zeta_K(s):=\sum N(I)^{-s}$, where the sum runs over all non-zero ideals $I$ of  $\mathcal{O}_K$, the ring of integers of $K$, and $N(I)$ is the norm of the ideal $I$. For any fixed $a>0$, we can define a distance 
$$ d(K_1,K_2) :=  \sup_{1 < s < 1+a} \left| \log \left| \frac{\zeta_{K_1}(s)}{\zeta_{K_2}(s)} \right| \right|. $$
For a prime number $p$ and a positive integer $f$, let $I_K(p,f)$ denote the number of ideals of $\mathcal{O}_K$ with norm $p^f$. Since $\zeta_K(s)$ admits an Euler product  
\[
\zeta_K(s)=\prod_{P \in \mathrm{Spec}\mathcal{O}_K} \frac{1}{1-N(P)^{-s}},
\]
we find 
$$
d(K_1,K_2)  = \sup_{1 < s < 1+a} \left| \sum_{p,f} \left(I_{K_1}(p,f)-I_{K_2}(p,f)\right) \log \left(1-p^{-fs}\right) \right|.
$$

\begin{ex}
Let us estimate the distance between the field of rational numbers $\Q$ and a real quadratic field $\Q(\sqrt{D})$ for $D>0$. Let $I$ denote the primes inert in $\Q(\sqrt{D})$, $S$ the set of split primes and $R$ the set of ramified primes, i.e., the divisors of the squarefree part of $D$. 
Using the Euler products, we find 
\begin{align*} d(\Q,\Q(\sqrt{D}) &=  \sup_{1 < s < 1+a} \left| \log \left| L(\chi_D,s)
     \right| \right|
\\ & = \sup_{1 < s < 1+a} \left| \log \left| 
\frac{ 
\prod\limits_{p \in S} \left(1-p^{-s}\right)^{-2}
\cdot \prod\limits_{p \in I} (1-p^{-2s})^{-1}
\cdot \prod\limits_{p \in R}
(1-p^{-s})^{-1}
}
{  \prod\limits_{p} (1-p^{-s})^{-1}   } 
     \right| \right|. 
     \end{align*}

As $D \rightarrow 1$, the distance goes to zero. 
If $D_i$ denotes the product of the first $i$ prime numbers, then $R$ increases to the set of all primes, so then $\lim\limits_{i \rightarrow + \infty} d(\Q,\Q(\sqrt{D_i})) =0$, too. We verified numerically in SAGE that $L(\chi_{D_i},1) \rightarrow 1$ as $i \rightarrow +\infty$; The figure is a plot of $\left| L(\chi_{D_i},1) \right|$ as a function of $i$. 
\begin{figure}[h] 
\includegraphics[width=8cm]{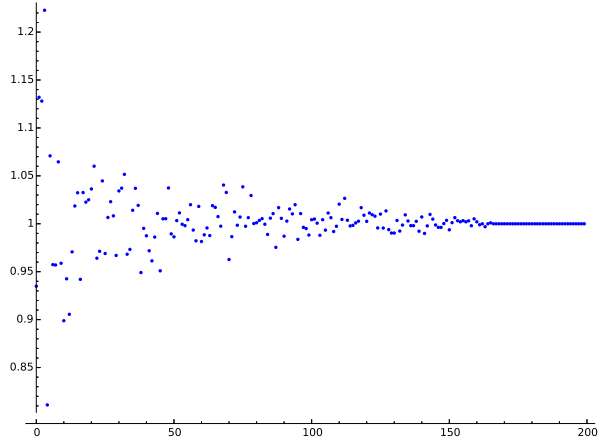}
\caption{$\left| L(\chi_{D_i},1) \right|$ a a function of $i$}
\end{figure} 

\end{ex}

\section{Series with general coefficients}

In this section, we study what happens if we have pointwise convergence of general Dirichlet series 
$$ D_n(s)=\sum_{\nu \geq 1} a_{n,\nu} e^{ -s \mu_{n,\nu}} \rightarrow D(s)=\sum_{\nu \geq 1} a_\nu e^{ -s \mu_\nu}, $$
all of which converge absolutely in a common half plane $\Re(s)>\gamma$, 
with $a_*$ complex coefficients, and $\mu_*$ is a \emph{strictly} increasing sequence of real numbers. The previous case occurs when $\{\mu_*\} = \{\log \lambda_*\}$ and $a_*$ counts the multiplicities in $(\lambda_*)$. In this paper, we will not discuss subtleties that arise from such series that have a different region of convergence and absolute convergence. 
We start by discussing two special cases. 

\begin{se}[Taylor series] The first is when $\mu_{n,\nu}=\log \nu$ for all $n=\emptyset,1,2,\dots$. In this case, we set $z=e^{-s}$ and we get a (pointwise) convergence of Taylor series 
$$ D_n(z) = \sum_{\nu \geq 0} a_{n,\nu} z^\nu \rightarrow D(z) = \sum_{\nu \geq 0} a_{\nu} z^\nu. $$ In this case, the individual series $D_*$ converge in $\Omega:=\{z>e^{-\gamma}\}$ to a holomorphic function (by assumption). Evaluation at zero gives $\lim_{n \rightarrow + \infty} a_{n,0} = a_0$, and we can proceed by induction to conclude that $$\lim_{n \rightarrow + \infty} a_{n,\nu} = a_\nu$$ for all $\nu$. 

Alternatively, one can use the representation of the coefficients by a complex contour integral to deduce the result ``in a more complicated way''. Namely, fix $\epsilon>0$, and let $n_0$ satisfy that $|D_n(z)-D(z)|<\epsilon$ for $n>n_0$, uniformly in $z \in K \subset \Omega$, where $K$ is a compact set. For a contour $C \subset K$ around $z=0$ (independent of $n$), we have 
$$ |a_{n,\nu} - a_\nu| \leq \frac{1}{2 \pi}  \int_{C} |D_n(z)-D(z)| |z|^{-n-1} dz \leq \varepsilon. $$ 
 \end{se} 
 
\begin{se}[Constant exponents] The reason for providing this second proof is that it leads us to the next special case, in which we use the analogue of the integral representation for the coefficients for general Dirichlet series, also called Perron's formula. This formula gives a representation of the terms of a general Dirichlet series by integration over a vertical line in the complex plane, and since this integration domain, unlike the contour in the Taylor series proof, is not compact, we will need to work more to establish the result (or assume uniform convergence on an entire half-line, which seems too strong an assumption).  This second special case occurs if $\mu_{n,\nu}$ is constant in $n$. Then we have the following result: 
\end{se} 

\begin{theorem} \label{qwe}
 Assume that $D_n$ ($n=\emptyset,1,2,\dots$) is a set of Dirichlet series that converge absolutely in   a
common half plane $\Re(s)>\gamma$, and such that $D_n(s) \rightarrow D(s)$ converges pointwise there. Assume that $\mu_{n,\nu}=\mu_\nu$ is independent of $n$.  
Then for every $n$, we have  $$\lim_{n \rightarrow + \infty} a_{n,\nu} = a_\nu;$$
actually, for $\sigma_1>\gamma $, we have a convergence of sequences
$$ (a_{n,\nu} e^{-\sigma_1 \mu_\nu})_{\nu=1}^\infty \rightarrow (a_\nu e^{-\sigma_1 \mu_\nu})_{\nu=1}^\infty \mbox{ in } \ell^{\infty}. $$   
\end{theorem}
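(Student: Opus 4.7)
The plan is to reduce the whole statement to working with the difference series. Set $E_n := D_n - D = \sum_\nu b_{n,\nu} e^{-s\mu_\nu}$ with $b_{n,\nu} := a_{n,\nu} - a_\nu$; by linearity $E_n$ is itself a general Dirichlet series with the shared exponents $\mu_\nu$, absolutely convergent on $\Re(s) > \gamma$, and tending to $0$ pointwise there. The two claims of the theorem translate to $b_{n,\nu} \to 0$ for each $\nu$ and $\sup_\nu |b_{n,\nu}|e^{-\sigma_1\mu_\nu} \to 0$ for each $\sigma_1 > \gamma$.

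For the coefficient-wise claim I would argue by induction on $\nu$. The base case rests on the identity
$$ e^{s\mu_1} E_n(s) = b_{n,1} + \sum_{\nu \geq 2} b_{n,\nu} e^{-s(\mu_\nu - \mu_1)}, $$
which for any $\gamma < \sigma^* < \sigma$ yields the estimate
$$ |b_{n,1}| \leq e^{\sigma \mu_1}|E_n(\sigma)| + e^{\sigma(\mu_1 - \mu_2) + \sigma^*\mu_2}\sum_\nu |b_{n,\nu}|e^{-\sigma^*\mu_\nu}. $$
Since $\mu_1 < \mu_2$, the prefactor $e^{\sigma(\mu_1 - \mu_2)}$ decays exponentially as $\sigma \to \infty$. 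Given $\epsilon > 0$, one picks $\sigma$ large enough to bring the second term below $\epsilon/2$ and then, for that fixed $\sigma$, picks $n$ large enough using pointwise convergence $E_n(\sigma) \to 0$. The induction step then applies the same argument to $E_n(s) - b_{n,1}e^{-s\mu_1}$ (whose leading exponent is $\mu_2$), using the fact that $b_{n,1}$ is now known to tend to $0$.

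The main obstacle is the uniform control in $n$ of $\sum_\nu |b_{n,\nu}|e^{-\sigma^*\mu_\nu}$, which pointwise convergence alone does not provide. This is the ``extra work'' flagged in the paragraph preceding the theorem statement; I would establish such a uniform bound by Perron's inversion formula, writing each coefficient $b_{n,\kappa}$ as a vertical contour integral of $E_n(s)\,(e^{sy_2} - e^{sy_1})/s$ over $\Re(s) = \sigma^*$ (with $\mu_{\kappa-1} < y_1 < \mu_\kappa < y_2 < \mu_{\kappa+1}$), and exploiting the $1/|t|$-decay of the Perron kernel together with the pointwise vanishing of $E_n$ on the integration line. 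This is much lighter than the fully general case treated in the sequel precisely because the exponents $\mu_\nu$ are shared across all $n$.

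For the $\ell^\infty$ claim, fix $\sigma_0 \in (\gamma, \sigma_1)$ and split the supremum at an index $N$:
$$ \sup_\nu |b_{n,\nu}|e^{-\sigma_1\mu_\nu} \leq \max_{\nu < N} |b_{n,\nu}|e^{-\sigma_1\mu_\nu} + e^{-(\sigma_1 - \sigma_0)\mu_N}\sum_\nu |b_{n,\nu}|e^{-\sigma_0\mu_\nu}. $$
The tail on the right is uniformly small in $n$ for $N$ large (using the same uniform absolute bound and the fact that $\mu_N \to \infty$), while the head tends to $0$ as $n \to \infty$ by the coefficient-wise convergence already established. Taking $N$ large first and then $n$ large delivers the $\ell^\infty$ convergence.
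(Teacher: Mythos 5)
Your opening move --- pass to the difference series $E_n=D_n-D$ --- is the same as the paper's, but from there the two arguments diverge. The paper extracts the weighted coefficient directly from the Bohr mean-value formula on a vertical line,
\begin{equation*}
b_{n,\nu}\,e^{-\sigma_1\mu_\nu}=\lim_{T\to\infty}\frac{1}{T}\int_0^T B_n(\sigma_1+it)\,e^{i\mu_\nu t}\,dt,
\end{equation*}
so that $|b_{n,\nu}|e^{-\sigma_1\mu_\nu}$ is bounded, \emph{uniformly in $\nu$}, by the mean of $|B_n|$ along $\Re(s)=\sigma_1$, and then shows (Lemma \ref{bound-lemma}, via truncation and an almost-periodicity/torus-density argument for Dirichlet polynomials) that this quantity is controlled, up to $\epsilon$, by $|B_n(\sigma_1+it_0)|$ at a single point, where pointwise convergence finishes. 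That gives the $\ell^\infty$ conclusion in one stroke, with the coefficient-wise statement as a special case. Your route --- inductively extract $b_{n,1},b_{n,2},\dots$ by multiplying by $e^{s\mu_\nu}$ and letting $\sigma\to+\infty$ along the reals, then a separate head-tail split for $\ell^\infty$ --- is a genuinely different and more elementary argument; the displayed estimates are correct and, modulo the uniformity issue you flag yourself, the logic is sound.

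The gap is precisely in how you propose to supply that uniformity, and the Perron sketch does not do the job. On the line $\Re(s)=\sigma^*$ the kernel $(e^{sy_2}-e^{sy_1})/s$ decays only like $1/|t|$, so the Perron integral is merely conditionally convergent and cannot be estimated by passing to absolute values; the bound you would be aiming for involves $|E_n(\sigma^*+it)|$ along the whole line, which is essentially the quantity $\sum_\nu|b_{n,\nu}|e^{-\sigma^*\mu_\nu}$ you set out to control, making the argument circular; and even a bound on each $|b_{n,\kappa}|$ would still leave the sum over $\kappa$ unbounded without further input. The paper instead takes the hypothesis ``absolute convergence in a common half-plane'' to yield \emph{uniform tail decay}: for each $\epsilon>0$ there is an $N$, independent of $n$, with $\sum_{\nu>N}|a_{n,\nu}|e^{-\sigma\mu_\nu}<\epsilon$, and that is all it uses. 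Your argument needs a little more (a uniform bound on the heads too), but that also comes cheaply: the uniform tail decay plus pointwise convergence at $N$ distinct real abscissas forces the finite vectors $(a_{n,1},\dots,a_{n,N})$ to stay bounded in $n$, because the matrix $\bigl(e^{-\sigma_j\mu_\nu}\bigr)_{1\le j,\nu\le N}$ is a nonsingular generalized Vandermonde matrix. Replacing the Perron detour by this observation closes the gap, after which the induction, the limit interchange, and the head-tail split all go through.
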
 

\begin{proof}
 Consider the difference
\[
B_n(s):= D_n(s)-D(s)=\sum_{\nu \geq 1} {b_{n,\nu}}{e^{-s \mu_\nu}},
\]
where $$b_{n,\nu}:=a_{n,\nu}-a_{\nu}.$$ 


Now according to Theorem I.3.1 in \cite{mandel} we have the following integral representation for every $n$ and every fixed $\nu$: 
\begin{eqnarray*}
 \left|
b_{n,\nu}e^{- \sigma_1 \mu_\nu}
\right|
 &= &
\left|
\lim_{T\rightarrow \infty} \frac{1}{T} \int_{0}^T B_n(\sigma_1 + i t) e^{\mu_\nu i t} dt 
\right| \\
 &\leq &
\lim_{T\rightarrow \infty} \frac{1}{T} \int_{0}^T \left| B_n(\sigma_1 + i t) \right|   dt
\end{eqnarray*}
We could finish the proof here by assuming that $D_n$ converges uniformly to $D$ on the entire line $\Re(s)=\sigma_1$. However, we can avoid this  (strong) hypothesis by proving the following lemma: 
\begin{lemma} \label{bound-lemma}
 For every $\epsilon>0$ there is a $t_0 \in \mathbf{R}$ such that for $t \in \R$ and all $n$, 
 \[
  |B_n(\sigma_1 + it)| \leq  \epsilon + |B_n(\sigma_1+it_0)|.
 \]
\end{lemma}
We are then finished with the proof of Theorem \ref{qwe}, since now, given any $\epsilon>0$, the pointwise convergence at $t_0$ implies that there exists $n_0$ such that for all $n>n_0$, 
 \[
  |B_n(\sigma_1 + it)| \leq  \epsilon + |B_n(t_0)| \leq 2 \epsilon
 \]
and then the above inequality becomes  
$$ \left|
b_{n,\nu}e^{- \sigma_1 \mu_\nu}
\right| \leq 2 \epsilon, $$
Since $\sigma_1$ and $\nu$ are fixed, $e^{- \sigma_1 \mu_\nu}$ is a non-zero constant, and this proves that $b_{n,\nu} \rightarrow 0$ as $n \rightarrow +\infty$. Since the $\epsilon$-bound holds uniformly in $\nu$, we do find the $\ell^\infty$ convergence as stated. 

\begin{proof}[Proof of Lemma \ref{bound-lemma}] Since the series $D_n$ are absolutely convergent on a common half plane, 
their sequences of tails tend  to zero uniformly in $n$, that is, for every 
$\epsilon>0$ there is an $N$ that is independent of $n$ such that 
\[
 \sum_{\nu=N+1}^\infty | a_{n,\nu} e^{-s \mu_\nu}|+ \sum_{\nu=N+1}^\infty 
|a_\nu e^{-s \mu_\nu}| < \epsilon.
\]
Hence 
\begin{equation} |B_n(s)| 
\leq \left| B_n^{\leq N}(s) 
+D_n^{> N}(s)  -D^{> N}(s)  \right| 
\leq \left|  B_n^{\leq N}(s) \label{bb1}
 \right| + \epsilon
\end{equation} 
We will now estimate the sum of the first $N$ terms on a vertical line $\Re(s)=\sigma_1$. 
Consider the function $f:\mathbf{R} \rightarrow \left(S^1\right)^{N}$ given by
\[
 t\mapsto (e^{i t \mu_{1}}, \ldots, e^{i t \mu_{N}})
\]
and the function 
\[
 F:\left(S^1\right)^{N} \rightarrow \mathbf{C}
\]
sending
\[
 (P_1,\ldots, P_N) \mapsto 
\sum_{\nu=1}^N 
 e^{-\sigma_1 \mu_\nu} P_\nu.
\]
The function $F$ is continuous on a compact set therefore it attains a 
maximal value $M$ at a point $A^0:=(P_1^0,\ldots, P_N^0)$.
 
\begin{lemma}
There exists $t_0 \in \mathbf{R}$ such that all the numbers $\{t_0 \mu_\nu\}_{\nu=1}^\infty$
are irrational.
\end{lemma}
\begin{proof}
 The set of multiples $\{ b \rho\}$ of a given real number $\rho \in \mathbf{R}$ such that $b\rho \in \mathbf{Q}$ is
 just $\frac{1}{\rho} \mathbf{Q}$ and this set is denumerable. A denumerable union 
 of denumerable sets cannot exhaust the set of reals and the result follows. 
\end{proof}

This proves that the set $f(\mathbf{R})$ is dense in $\left(S^{1} \right)^{N}$.
Therefore, 
for every $\delta>0$ there exists $t_0 \in \mathbf{R}$ such that 
$ |f(t_0)-A^0| \leq \delta$, and hence, since $F$ is continuous, 
\begin{equation} \label{bb2}
|B_n^{\leq N}(t_0) -M|=|F(f(t_0))-F(A^0)|< \epsilon.
\end{equation}
Since $M$ is the maximum, for all $t\in \mathbf{R}$, we have
\begin{equation} \label{bb3}
 |B_n^{\leq N}(\sigma_1+it)| \leq M\leq |B_n^{\leq N}(\sigma_1+it_0)|+|B_n^{\leq N}(\sigma_1+it_0) -M|.
\end{equation}
By equations (\ref{bb1}), (\ref{bb2})  and (\ref{bb3}) we now have
$$ |B_n(\sigma_1 + i t)| \leq  \epsilon + |B_n^{\leq N}(\sigma_1 + it)| \leq \epsilon + |B_n^{\leq N}(\sigma_1+it_0)|+|B_n^{\leq N}(\sigma_1+it_0) -M| \leq 2 \epsilon +  |B_n^{\leq N}(\sigma_1+it_0)|,
$$
and this finishes the proof of lemma \ref{bound-lemma}.

 \end{proof} The proof of the lemma also completes the proof of the theorem. 
 \end{proof}


\begin{app} Let $X$ denote a closed smooth Riemannian manifold and let $a \in C^{\infty}(X)$ denote a smooth function. Define a generalized Dirichlet series by
$$ \zeta_{X,a}:=\mathrm{tr}(a\Delta_X^{-s}), $$
cf.\ \cite{CdJ}. Then 
$$ \zeta_{X,a} = \sum_{0 \neq \lambda \in \Lambda_X} \frac{1}{\lambda^s} \cdot \int_X a \sigma_{X,\lambda}, $$
where 
$$ \sigma_{X,\lambda}:=\sum_{\lambda \dashv \Psi} |\Psi|^2 $$
is the sum of the elements $\Psi$ of an orthonormal basis of eigenfunctions that belong to the eigenvalue $\lambda$. In \cite{CdJ},  it was proven that a diffeomorphism $\varphi \colon Y \rightarrow X$ between closed Riemannian manifolds \emph{with simple spectrum} is an isometry precisely if $\zeta_{X,a} = \zeta_{Y,\varphi^*(a)}$ for all $a \in C^\infty(X)$ (and there is also a version if the spectrum is not simple). 

Now assume that we have a compact manifold $X$ and a family $\{g_r\}$ ($r \in \R$) of isospectral metrics with simple eigenvalues on $X$ (cf.\ Gordon and Wilson \cite{GW} for the existence of such families). Denote by $\Psi_{r,\lambda}$ the normalized real eigenfunction for the metric $g_r$ corresponding to the eigenvalue $\lambda$. If all zeta functions converge in the sense that  
\begin{equation} \label{ZZ} \zeta_{X,g_r,a} \rightarrow \zeta_{X,g_s,a} \mbox{ for all } a \end{equation}
then we find from the above result that $$\int a \Psi^2_{r,\lambda} d\mu_r \rightarrow \int a \Psi^2_{s,\lambda} d\mu_s $$ for all functions $a \in C^\infty(X)$, where $\mu_r$ is the measure belonging to the metric $g_r$. 

Taking residues at $\dim(X)/2$ in (\ref{ZZ}) for $a=1$, we find that the volume of $X$ in $g_r$ is constant, and then taking residues for general $a$, we find that for all $a \in C^\infty(X)$, 
$$ \int a d\mu_r  \rightarrow \int a d\mu_s. $$
Changing variables, we get that 
$$ \int a \left(1-\frac{d\mu_r}{d\mu_s}\right) d\mu_s \rightarrow 0, $$
and hence that $\frac{d\mu_r}{d\mu_s} \rightarrow 1$. 
From the above theorem, we conclude that $\Psi^2_{r,\lambda}\frac{d\mu_r}{d\mu_s} \rightarrow \Psi^2_{s,\lambda},$  and hence that 
 $$\Psi^2_{r,\lambda} \rightarrow \Psi^2_{s,\lambda},$$
 a convergence of squared eigenfunctions. 

\end{app}

%

\section{General case} \label{general} 

Finally, in the most general case of varying coefficients and varying exponents, we prove a theorem about accumulation points. First, we do some preparation.

\begin{definition} \label{strongI} For a fixed strictly positive real function $g$, define for a real function $f$, the $g$-sup norm as $$||f||_{\infty,g}:=\sup_{x \in \R} \left|\frac{f(x)}{g(x)}\right|,$$ when it is defined. 
We say that a sequence of functions $\{f_n\}$ \emph{converges multiplicatively}
 to a real function $f$ if there exists a strictly positive real function $g$ that is integrable with respect to the multiplicative Haar measure on $\R^*$ (i.e., such that $\int_{\R} g(x)\frac{dx}{|x|}<+\infty$), such that 
$$|| f_n(x) - f(x) ||_{\infty,g} \rightarrow 0$$
for $n \rightarrow + \infty$. 
\end{definition}

\begin{definition} For $f$ a complex function defined for $\Re(s)=c$,  and $x \in \R$, 
denote by 
$$I^c_x(f):= \int_{\Re(s)=c} f(s) e^{xs} \frac{ds}{s}. $$
\end{definition}
The relevance of this integral for the theory of Dirichlet series lies in the following formula of Perron: if $D(s)=\sum_{\nu \geq 1} a_\nu e^{ -s \mu_\nu}$ is convergent for $s=\beta+i \gamma$ and 
$c>0$, $c > \beta$, $x\in \mathbf{R}$, $x \geq \beta$, then 
\[
\sum_{\lambda_n \leq x} a_n =\frac{1}{2 \pi i} I^c_x(D), 
\] with the convention that the last summand on the left hand side is multiplied by $1/2$ if $x$ equals some $\lambda_\nu$.

Since $I^c_x(D)$ does not depend on $c$ once it satisfies the conditions for Perron's formula, we will now write $I_x(D)$ for $I^c_x(D)$ with any suitable $c$. 
\begin{lemma} \label{inter}
If $\{f_n(c+it)\}$ converges multiplicatively to $f(c+it)$ in $t$, then for all $x \in \R$, 
\[
 \lim_n I_x(f_n)= I_x( \lim_n f_n)=I_x(f).
\]
\end{lemma}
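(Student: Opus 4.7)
The plan is to reduce the claim to a standard dominated convergence estimate on the defining integral along the vertical line $\Re(s)=c$. Parametrizing as $s = c + it$, one rewrites
\[
I_x(f_n) - I_x(f) = i\,e^{xc} \int_{\R} \bigl(f_n(c+it) - f(c+it)\bigr) \frac{e^{ixt}}{c+it}\, dt,
\]
so the task is to bound this integral and show that it tends to zero.

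The multiplicative convergence hypothesis immediately gives the uniform pointwise estimate
\[
|f_n(c+it) - f(c+it)| \leq \|f_n - f\|_{\infty, g}\, g(t),
\]
where the prefactor tends to $0$ as $n\to\infty$. It follows that
\[
|I_x(f_n) - I_x(f)| \leq e^{xc}\, \|f_n - f\|_{\infty, g} \int_\R \frac{g(t)}{|c+it|}\,dt,
\]
and the lemma reduces to proving that this last integral is finite.

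For the finiteness, the main observation is that $\tfrac{1}{|c+it|}$ is bounded by $\tfrac{1}{c}$ for $|t|\leq 1$ and by $\tfrac{1}{|t|}$ for $|t|\geq 1$; splitting the integral at $|t|=1$ and using that $\tfrac{1}{|t|} \geq 1$ on $[-1,1]$, both pieces are dominated by a constant multiple of $\int_\R g(t)\,\tfrac{dt}{|t|}$, which is finite by the multiplicative Haar integrability of $g$ built into the definition of multiplicative convergence.

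The only delicate point I anticipate is managing the behavior at $t=0$, where the multiplicative Haar measure $dt/|t|$ is singular but the kernel $\tfrac{1}{|c+it|}$ from the definition of $I_x$ is benign; the split above absorbs this mismatch, since multiplicative integrability of $g$ forces ordinary Lebesgue integrability of $g$ on $[-1,1]$. The second equality $I_x(\lim_n f_n) = I_x(f)$ in the statement is automatic from $\lim_n f_n = f$, and the independence of $I_x$ on the choice of $c$ (noted just before the lemma) means any convenient $c$ large enough for Perron's formula to apply can be used.
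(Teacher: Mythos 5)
Your argument is correct and matches the paper's proof in essence: pull out $\|f_n-f\|_{\infty,g}$, reduce to showing $\int_\R g(t)/|c+it|\,dt<\infty$, and deduce this from the multiplicative Haar integrability of $g$. The only cosmetic difference is that you split the integral at $|t|=1$, whereas the paper simply uses the global bound $\frac{1}{|c+it|}=\frac{1}{\sqrt{c^2+t^2}}\leq\frac{1}{|t|}$ for all $t$, which makes the split unnecessary.
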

\begin{proof}
 We have
\begin{align*}
\left| I_x(f_n)-I_x(f)\right| &\leq \int_{\R} \left| \frac{f_n(c+it)-f(c+it)}{ 
 c+it} e^{x(c+it)} \right| dt  \\
&\leq \int_{\R} \left| \frac{f_n(c+it)-f(c+it)}{c+it} e^{x(c+it)} \right| dt \\
 &\leq e^{cx} \left( \int_{\R} \frac{g(t)}{\sqrt{c^2+t^2}}dt \right) \cdot ||f_n(c+it)-f(c+it)||_{\infty,g} \\
  &\leq e^{cx} \left( \int_{\R} \frac{g(t)}{|t|}dt \right) \cdot ||f_n(c+it)-f(c+it)||_{\infty,g} \\
   & \leq C \varepsilon,
  \end{align*}
 with $C=e^{cx} \left( \int_{\R} \frac{g(t)}{|t|}dt \right)$ finite constant, 
 for $n$ sufficiently large. 
This proves the desired result.
\end{proof}

%
%
%
%

Before stating the main result of this section, we need to introduce some notation: 
\begin{note} \label{subseq}
Assume that all sequences $(\lambda_{n,j})_{n=1}^\infty$ are bounded.
 Let $ \ell_i^{(j)}$, $i\in I_j$ be the accumulation points of sequence $(\lambda_{n,j})_{n=1}^\infty$. 

We consider a subsequence $n_k$ such that for all $j$ 
$ \lim\limits_{n_k\rightarrow \infty }\lambda_{n_k,j}=\ell_{i_j}^{(j)}$ for a 
selection $i_j \in I_j$. Notice that the sequences $(\lambda_{n_k,j})_{k=1}^\infty$ and 
$(\lambda_{n_k,j+1})_{k=1}^\infty$ satisfy $\lambda_{n_k,j} < \lambda_{n_k,j+1}$ but they can tend to the same 
accumulation point. 

For the infinite vector  of convergent sequences $\big( (\lambda_{n_k,j})_{k=1}^\infty \big)_{j\geq 1}$ converging 
to the infinite vector $\big( \ell_{i_j}^{(j)} \big)_{j\geq 1}$ we consider the sequence $m_1,m_2,\ldots,$
such that 
\[\ell_{i_1}^{(1)}=\ell_{i_2}^{(2)}=\cdots=\ell_{i_{m_1}}^{(m_1)}, 
\ell_{i_{m_1+1}}^{(m_1+1)}=\ell_{i_{m_1+2}}^{(m_1+2)}=\cdots=\ell_{i_{m_2}}^{(m_2)}, \mbox{ etc.}
\]
\end{note}
\[
 \xymatrix{
(\lambda_{n_k,1}) \ar@{.}[r] \ar[d]  & (\lambda_{n_k,m_1})  \ar[dl] &  
(\lambda_{n_k,m_1+1}) \ar@{.}[r] \ar[d]  & (\lambda_{n_k,m_2})  \ar[dl]\\ 
 \ell_{i_1}^{(1)} &   &    \ell_{i_{m_1+1}}^{(m_1+1)} & 
} \cdots 
\]

\begin{theorem} \label{dif} 
We use the notation of \ref{subseq}.
Assume  that $D_n$ converges multiplicatively to $D(s)=\sum_{j \geq 1} a_j e^{-s \log \lambda_j}$.
Then, $\lambda_j$ are accumulation points for some sequence $(\lambda_{n,j'})_{n=1}^\infty$.

Consider the set of subsequences $\big( (\lambda_{n,j})_{n=1}^\infty \big)_{j\geq 1}$ converging 
to the infinite vector $\big( \ell_{i_j}^{(j)} \big)_{j\geq 1}$. 
Suppose that  the sequences $(\lambda_{n_k,j})_{n=1}^\infty$ for $j=m_\mu+1,\dots,m_{\mu+1}$ converge to $\ell$.
Set 
\[
A_{n_k}^{(\mu)}:=\sum_{j=m_\mu+1}^{m_{\mu+1}} a_{n_k,j}, \mbox{ for }  \mu \geq 0.
\]
Then, 
\begin{equation} \label{sum-fo}
 \lim A_{n_k}^{(\mu)}=\left\{
\begin{array}{l}
 a_i \ \ \mbox{if } \ell=\lambda_i, \\
 0 \ \  \mbox{ otherwise. }\\

\end{array}
\right.
\end{equation}
\end{theorem}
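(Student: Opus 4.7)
The plan is to express each cluster sum $A_{n_k}^{(\mu)}$ as a difference of Perron integrals of $D_{n_k}$ at two cutoffs just below and just above $\log\ell$, and then to pass to the limit using Lemma~\ref{inter}. First I would choose reals $x_1, x_2$ with $\log\ell_- < x_1 < \log\ell < x_2 < \log\ell_+$, where $\ell_- := \lim_k \lambda_{n_k,m_\mu}$ and $\ell_+ := \lim_k \lambda_{n_k, m_{\mu+1}+1}$ (with the conventions $\ell_- := 0$ and $\ell_+ := +\infty$ at the extremities). Since the definition of the breakpoints $m_\mu$ forces $\ell_- \neq \ell \neq \ell_+$, and since monotonicity in $j$ propagates to the limits, one has $\ell_- < \ell < \ell_+$, so such $x_1, x_2$ exist. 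I would additionally require $x_1, x_2$ to avoid the set $\{\log\lambda_j\}$ of the limit series---possible because only finitely many $\lambda_j$ lie in the bounded interval $(\ell_-, \ell_+)$, the $\lambda_j$ being strictly increasing and discrete---with the single exception that, when $\ell = \lambda_i$ for some $i$, the value $\log\lambda_i$ is allowed (and required) to lie in the open interval $(x_1, x_2)$.

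Perron's formula applied to $D_{n_k}$ then yields
\[
\frac{1}{2\pi i}\bigl(I_{x_2}(D_{n_k}) - I_{x_1}(D_{n_k})\bigr) = \sum_{x_1 < \log \lambda_{n_k, j} \leq x_2} a_{n_k,j}.
\]
For $k$ large, the convergences $\lambda_{n_k,j} \to \ell$ for $j = m_\mu+1,\ldots,m_{\mu+1}$, together with $\lambda_{n_k,m_\mu} \to \ell_- < e^{x_1}$, $\lambda_{n_k,m_{\mu+1}+1} \to \ell_+ > e^{x_2}$, and the monotonicity of $(\lambda_{n_k,j})_j$, force the indices contributing to the right-hand sum to be exactly $j = m_\mu+1,\ldots,m_{\mu+1}$, so the right-hand side equals $A_{n_k}^{(\mu)}$. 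By the assumed multiplicative convergence of $D_n$ to $D$ (which passes to the subsequence $D_{n_k}$) and Lemma~\ref{inter}, the left-hand side tends to $\frac{1}{2\pi i}(I_{x_2}(D) - I_{x_1}(D))$, which by Perron's formula for $D$ equals $\sum_{x_1 < \log \lambda_j \leq x_2} a_j$. By the construction of $x_1, x_2$ this sum is $a_i$ when $\ell = \lambda_i$ and $0$ otherwise, yielding formula~(\ref{sum-fo}).

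The opening assertion---that each $\lambda_j$ of the limit series is an accumulation point of some $(\lambda_{n,j'})_n$---follows by the same mechanism: if some $\lambda_i$ failed to be such an accumulation point, then any subsequence extracted as in Notation~\ref{subseq} would satisfy $\lambda_i \neq \ell^{(j)}_{i_j}$ for all $j$, and cutoffs $x_1 < \log\lambda_i < x_2$ chosen to isolate $\log\lambda_i$ on the limit side while avoiding every $\log\ell^{(j)}_{i_j}$ on the subsequence side would force the right-hand sum above to be empty for $k$ large, contradicting $a_i \neq 0$. The main technical point I expect is the clean application of Perron's formula at the chosen $x_1, x_2$, namely sidestepping the $1/2$-boundary convention; this is automatic once the cutoffs lie in open gaps of $\{\log\lambda_j\}$ and of the relevant accumulation values, because for $k$ large the approximating $\lambda_{n_k,j}$ are pushed strictly into the open interval $(e^{x_1}, e^{x_2})$ or strictly outside its closure. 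The multiplicative-convergence hypothesis of Lemma~\ref{inter} is provided gratis by the theorem's assumption.
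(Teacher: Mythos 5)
Your proposal is correct and follows essentially the same strategy as the paper: isolate each cluster of coefficients via Perron's formula and pass to the limit using Lemma~\ref{inter}. The only difference is organizational --- the paper proceeds iteratively from the smallest accumulation point, obtaining each cluster sum by subtracting successive cumulative Perron integrals, whereas you express $A_{n_k}^{(\mu)}$ directly as $\tfrac{1}{2\pi i}\bigl(I_{x_2}(D_{n_k})-I_{x_1}(D_{n_k})\bigr)$ for a pair of cutoffs isolating $\log\ell$; you are also somewhat more careful than the paper's exposition about working consistently on the $\log\lambda$ scale (the scale on which $I^c_x$ acts) and about arranging the cutoffs so the half-mass boundary convention never enters.
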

\begin{proof}
Assume that the set of subsequences  
$\big( (\lambda_{n_k,j})_{k=1}^\infty \big)_{j\geq 1}$ converges to the set of 
accumulation points $(\ell_i^{(j)})$. 

Consider the first eigenvalue $\lambda_1$ of $D$. If $\ell$ is the first  element in the set $\ell_i^{(j)}$ 
that is smaller than $\lambda_1$ then by choosing $x$ such that $\ell < x < \lambda_1$, by Perron's formula, we have that 
$I_x(D_{n_k})=\sum_{j=1}^{m_1} a_{n_k,j}=A_{n_k}^{(0)}$ should tend to $I_x(D)=0$ since $x<\lambda_1$. 
This proves that $A_{n_k}^{(0)}$ tends to zero as desired. We proceed now to the next accumulation point that 
is smaller than $\lambda_1$ and by the same argument we prove that 
$\lim\limits_{n_k\rightarrow \infty} \sum_{j=1}^{m_2} a_{n_k,j}=0$. Then, since the limit of the sum of the 
first $m_1$ terms tends to zero we have that 
\[
 \lim\limits_{n_k \rightarrow \infty} \sum_{j=m_1+1}^{m_2}a_{n_k,j}=0,
\]
and so the desired result is proved for all $\ell< \lambda_1$. 

We will prove now that $\lambda_1$ is an accumulation point. Indeed, for sufficiently small  $\epsilon>0$
the quantity 
\[
I_{\lambda_1-\epsilon}(D)-I_{\lambda_1+\epsilon}(D)=a_1 \neq 0.
\]
Using the above equation and  lemma \ref{inter} we obtain
\[
 \lim_{n_k \rightarrow \infty}  \big(I_{\lambda_1-\epsilon}(D_{n_k})-I_{\lambda_1+\epsilon}(D_{n_k}) \big)=a_1= 
\lim \sum_{\lambda_1-\epsilon < \lambda_{n_k,j} < \lambda_1+\epsilon} {a_{n_k,j}}.
\]
So by taking small $\epsilon$ we can find a  
subsequence tending to $\lambda_1$ so  $\lambda_1$ is one of the 
accumulation points of the sequence 
$\big( (\lambda_{n_k,j})_{k=1}^\infty \big)_{j\geq 1}$.
Notice also that desired  result of eq. (\ref{sum-fo}) is also 
proved.

We continue the proof by induction by taking $\ell$ to be inside $\lambda_1$ and $\lambda_2$ so the 
corresponding sum tends to zero, then we take $\ell$ to be $\lambda_2$, then inside $\lambda_2$
and $\lambda_3$ etc.

%
%
\end{proof}

\section{Relation with Laplace-Stieltjes Transform} \label{L}

The notion of Dirichlet series and Laplace transforms can be unified in 
terms of the Riemann-Stieltjes integrals (Widder \cite{Widder:29}, compare \cite{Arendt:2001}). 
%

\begin{definition} Suppose $\omega \geq 0$ is a real number. 
\begin{enumerate} \item The space $\mathrm{Lip}_\omega$ is defined as the set of functions $F \colon \R_{ \geq 0} \rightarrow \R$ with bounded norm 
$$ ||F||_{\mathrm{Lip},\omega}:= \sup_{0 \leq s < t}\frac{ |F(t)-F(s) |}{(t-s)e^{\omega t}} < \infty. $$
\item The space $\mathrm{Wid}_\omega$ is defined as the space of smooth function $(\omega,\infty) \rightarrow \R$ with bounded norm 
$$ ||D||_{\mathrm{Wid},\omega}:= \sup_{{s>\omega}\atop {k \in \mathbf{N}}} \frac{(s-\omega)^{k+1}}{k!} \left| \frac{d^k D}{ds^k}(s) \right| < \infty. $$
\end{enumerate}
\end{definition}
The main result is now that  the so-called \emph{Laplace-Stieltjes  transform} $$ F \mapsto  \int_0^\infty e^{-st} dF(t) $$  induces an isometric isomorphism 
$
\mathrm{Lip}_\omega \rightarrow \mathrm{Wid}_\omega
$ (\cite{Arendt:2001}, Thm.\ 2.4.1).

Widder (\cite{Widder:29}, Theorems 11.2 \and 12.4) proved that a Dirichlet series of the form $D(s)=\sum_{\nu} a_\nu e^{-s \mu_\nu}$ convergent for $\Re(s)>\omega$ is in the space $\mathrm{Wid}_{\omega}$. Also, such $D$ is the Laplace-Stieltjes transform of 
\[
F(t)=\sum_{\nu=0}^\infty a_\nu H(t-\mu_\nu),
\] where $H$ is the Heaviside step function. Thus, we immediately conclude the following: 
\begin{theorem} \label{final}
Suppose $D_n(s)=\sum_{\nu \geq 1} a_{n,\nu} e^{ -s \mu_{n,\nu}}$ is a sequence of Dirichlet series 
each converging absolutely in a common half plane $\Re(s)>\gamma$; then for any $\omega>\gamma$, $D_n$ converges to a Dirichlet series
$D(s)=\sum_{\nu \geq 1} a_\nu e^{ -s \mu_\nu}$ in $\mathrm{Wid}_{\omega}$-norm if and only if 
$$ \sum_{\nu=0}^\infty \big( a_{n,\nu} H(t-\mu_{n,\nu}) - a_\nu H(t-\mu_\nu) \big) \rightarrow 0$$ in $\mathrm{Lip}_\omega$-norm. \qed
\end{theorem}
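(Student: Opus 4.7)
The plan is to reduce the claim to a direct application of the isometric isomorphism stated in the excerpt, namely that the Laplace--Stieltjes transform $\mathcal{L}\colon \mathrm{Lip}_\omega \to \mathrm{Wid}_\omega$ defined by $F \mapsto \int_0^\infty e^{-st}\, dF(t)$ is an isometry. Since both spaces have been presented as normed vector spaces, and the transform is obviously linear, the claim will follow at once once we identify the pre-images of $D_n$ and $D$ under $\mathcal{L}$.

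First I would set
\[
F_n(t) := \sum_{\nu=0}^\infty a_{n,\nu}\, H(t-\mu_{n,\nu}) \quad \text{and} \quad F(t) := \sum_{\nu=0}^\infty a_\nu\, H(t-\mu_\nu),
\]
and invoke Widder's theorems 11.2 and 12.4, as quoted in the excerpt: since each $D_n$ (resp.\ $D$) converges absolutely for $\Re(s)>\gamma$, the step functions $F_n$ (resp.\ $F$) belong to $\mathrm{Lip}_\omega$ for any $\omega>\gamma$, and $\mathcal{L}(F_n)=D_n$, $\mathcal{L}(F)=F$. In particular, the difference $F_n-F$ lies in $\mathrm{Lip}_\omega$, and by linearity of $\mathcal{L}$ its Laplace--Stieltjes transform equals $D_n-D$.

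Then I would invoke the isometric isomorphism (\cite{Arendt:2001}, Thm.\ 2.4.1) to obtain the key identity
\[
\|D_n - D\|_{\mathrm{Wid},\omega} \;=\; \|F_n - F\|_{\mathrm{Lip},\omega}
\]
for every $n$. Taking $n \to \infty$ on both sides, convergence to zero on one side is equivalent to convergence to zero on the other, which is exactly the stated equivalence.

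The main (and essentially only) obstacle is bookkeeping: one should check that the difference series $\sum_\nu \bigl(a_{n,\nu} H(t-\mu_{n,\nu}) - a_\nu H(t-\mu_\nu)\bigr)$ is being interpreted correctly as $F_n - F$, so that the $\mathrm{Lip}_\omega$-norm appearing in the statement matches $\|F_n-F\|_{\mathrm{Lip},\omega}$. Because $\mathrm{Lip}_\omega$ is a linear space containing both $F_n$ and $F$, and because the Lipschitz-type seminorm in Definition 5.1 is translation-invariant in $F$, there is no ambiguity, and the equivalence follows immediately from the isometry.
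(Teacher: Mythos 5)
Your proposal is correct and takes essentially the same route as the paper, which states the result as an immediate consequence of the isometric isomorphism $\mathrm{Lip}_\omega \cong \mathrm{Wid}_\omega$ together with Widder's identification of a Dirichlet series as the Laplace--Stieltjes transform of the associated step function; you simply spell out the linearity/isometry bookkeeping that the paper elides. One small typo: ``$\mathcal{L}(F)=F$'' should read $\mathcal{L}(F)=D$.
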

It would be interesting to deduce Theorem \ref{qwe} and \ref{dif} from Theorem \ref{final}.

%
%
%

\bibliographystyle{plain}
\bibliography{spectral.bib}
\end{document}